\newcommand\NoBlackBoxes{\global\overfullrule0pt}
\theoremstyle{plain}\newtheorem{theo}{Theorem}
\theoremstyle{plain}\newtheorem{cor}{Corollary}
\theoremstyle{definition}\newtheorem{rem}{Remark}
\theoremstyle{plain}\newtheorem{defi}{Definition}[section]
\theoremstyle{plain}
\theoremstyle{plain}\newtheorem{prop}[defi]{Proposition}
\theoremstyle{definition}
\newcommand{\N}{{\mathbb{N}}}
\begin{document}
\title[Hitting times for random walks on random hypergraphs]{Hitting times, commute times, and cover times for random walks on random hypergraphs}

\author[Amine Helali]{Amine Helali}\thanks{Amine Helali would like to thank the University of M\"unster for the hospitality during his stay in June, July 2018, and January, February 2019.
His work was supported by the "Direction Europe et Internationale" of the Universit\'e de Bretagne Occidentale, Brest, France.}

%
\address[Amine Helali]{Laboratiore de Math\'{e}matiques de Bretagne Atlantique UMR 6205, UFR Sciences et Techniques, Universit\'{e} de Bretagne Occidentale, 6 Avenue Le Gorgeu, CS 93837, 29238 Brest, cedex 3, France.
\newline
Laboratoire MODAL'X, UFR SEGMI, Universit\'{e} Paris Nanterre, 200 Avenue de la R\'epublique 92001 Nanterre, France.}


\email[Amine Helali]{ amine.helali001@gmail.com}

\author[Matthias L\"owe]{Matthias L\"owe }\thanks{Research of the second author was
funded by the Deutsche Forschungsgemeinschaft (DFG, German Research Foundation) under Germany 's Excellence Strategy
EXC 2044 390685587, Mathematics M\"unster: Dynamics Geometry -Structure}
\address[Matthias L\"owe]{Fachbereich Mathematik und Informatik,
Universit\"at M\"unster,
Einsteinstra\ss e 62,
48149 M\"unster,
Germany}

\email[Matthias L\"owe]{maloewe@uni-muenster.de}

\subjclass[2010]{60B20, 05C81, 05C80}
\keywords{random walks on random hypergraphs, hitting time, cover time, commute time,  spectrum of random graphs}


\begin{abstract}
We consider simple random walk on the structure given by a random hypergraph in the regime where there is a unique giant
component. Using their spectral decomposition we give the asymptotics for
hitting times, cover times, and commute times and show that the results obtained for random walk on random graphs are universal.
\end{abstract}
%

\maketitle
\section{Introduction}
Random walks on random graphs have been an active research area in probability theory for a long time, see e.g.
\cite{doylesnell, lovaszgraphs, woess}. Besides being a field that poses interesting question in its own right,
they have also been a key tool to understand the properties of random graphs, especially close to the point of
phase transition (for a very readable survey see the recent monograph  \cite{vdH17}).
These so-called exploration processes have been transferred to the investigation of random hypergraphs,
see e.g. \cite{bollobasriordan2,bollobasriordan3}. This fact may motivate the study of random walks on random
hypergraph structures as well.

However, already \cite{CFR2},\cite{CFR1} studied the so-called cover time of random walk on a random uniform hypergraph.
They considered the following model: Take $H$ uniformly at random from all $r$-regular, $d$-uniform hypergraphs.
Hence every vertex $v\in V:=\{1, \ldots, n\}$ is contained in $r$ hyperedges and for all hyperedges $e \in E$
it holds $|e|=d$. Colin, Frieze and Radzik analyze simple random walk on the resulting structure, i.e. if the random walk is in a vertex $v$ at time $t \in \N$, for the vertex at time $t+1$ it selects a hyperedge $e$, such that $v \in e$ and then it selects any $w \neq v$ in $e$ with probability $\frac{1}{d-1}$ and walks there. For this walk the authors analyze the so-called  cover time, i.e. the expected time it takes the walk to see every vertex of $V$. They show that this time $C(H)$ is of order
$ \left(1  + \frac{1}{(r-1)(d-1)-1} \right) n \log n (1+o(1)).
$

Inspired by the results in \cite{LT14} we will study the hitting times, commute times, and cover times for random walks on random hypergraphs.
We will refrain from considering regular hypergraphs, but  stick with uniform hypergraphs setting.
This means, 
the underlying structure will consist of a realization of a random $d$-uniform hypergraph $H$ on $V=\{1, \cdots, n\}$, i.e. all edges $e \in \binom{V}{ d}$ are selected independently and with equal probabilities $p$. This model is known $H(n, p)$ and $E$  is the edge set of the hypergraph. We assume that $p=p_n \gg  \frac{\log^4 n}{n^{d-1}}$ (where we write $a_n \gg b_n$, if and only if $\frac{b_n}{a_n} \to 0$), such that, with probability converging to $1$, $H$ is connected. 
All the probabilities considered below are to be understood conditionally on the event that $H$ is connected.

On this structure we will consider simple random walk as described above.
This random walk, that we will henceforth call $(X_i)$, can either be considered as a random walk on the multi-graph $G=(V,\widetilde{E}$) associated with $H$, i.e. if $v, w\in V$ are in $k$ hyperedges, then there are $k$ edges connecting $v$ and $w$ in $\tilde E$. Alternatively, we can consider the random walk on the weighted graph, where the weight of an edge $\{v,w\}$ is the number of hyperedges containing both $v$ and $w$. The invariant measure of the walk is
$$\pi(i) = \frac{  \sum_{e\in E}  \mathds{1}_{\{i \in e\}}}{ d |E|  }  = \frac{d(i)}{ \sum_{j\in V} d(j)}$$
where the degrees $d(i)$ are counted in the multi-graph interpretation.

\section{Hitting times}
For the random walk $(X_i)$ consider the following  quantities.
Let 
$H_{ij}$ be the expected time it takes the walk to reach vertex $j$ when starting from vertex $i$.
Moreover, let
$$
H_j :=  \sum_{i\in V} \pi(i) H_{ij} \quad \mbox{ and}  \quad H^i :=  \sum_{j \in V} \pi(j) H_{ij}
$$
be the {\it average target hitting time} and {\it the average starting hitting time}, respectively (these names are taken from \cite{levinperes}).
Note that both, $H_j$ and $H^i$ are expectation values in the random walk measure, but random variables with respect to the realization of the random hypergraph. Also note that, in general, $H_j$ and $H_i$ will be different.

In \cite{LT14} the same quantities were studied for random graphs instead of random hypergraphs and it was shown that $H_j =n(1+o(1))$ asymptotically almost surely (a.a.s., for short), which means that the probability that a vertex $j$ admits $H_j$ that is not of this order, vanishes for $n \to \infty$. This result confirmed a prediction in the physics literature (see \cite{Sood}). The aim of the present note is to generalize this result to our random hypergraph setting. Our results can hence be understood as a universality statement about  random graphs and hypergraphs. They also may be interpreted as a generalization of the results in \cite{LT14} to weighted graphs and multi-graphs. A key difference between the random graph case and our situation, however, is not only that we may have multiple edges connecting two nodes, but also that these edges are no longer independent. Moreover, a key tool in \cite{LT14} is the analysis of the spectrum of a random graph taken from \cite
 {Erdoesetal11}. This is not available in our setting.

We will thus to give asymptotic results for $H_j$ and $H^i$. To this end, we will derive a different representation of $H_j$ and $H^i$ as in \cite{lovaszgraphs}.
Let $B:=\sqrt{D} A \sqrt{D}$ be the graph Laplacian of the hypergraph structure we realize. Here $D:=\left(\mbox{diag}(\frac{1}{d_i})\right)_{i=1}^n$ and $A=(a_{ij})$ is the adjacency matrix of the multi-graph $\widetilde{G}=(V, \widetilde{E})$.
Thus,
$$a_{ij}= \sharp \{  e \in \widetilde{E}: \,\, e=\{i, j\} \}$$
and $$B= \left(  \frac{a_{ij}}{\sqrt{d_i}\sqrt{d_j}} \right).$$
Let $$\lambda_1 \geq \lambda_2 \geq \cdots \geq \lambda_n $$ be the eigenvalues of $B$.
$w:=(\sqrt{d_1}, \cdots, \sqrt{d_n})$ satisfies
$$Bw=\sum_{i=1}^n a_{ij} w_j = \sum_{i=1}^n a_{ij} d_i = \frac{d_i}{\sqrt{d_i}} = \sqrt{d_i}.
$$
Thus, $\lambda_1 = 1$ is an eigenvalue for the matrix $B$ and by the Perron-Frobenius theorem it is the largest one. We will always normalize the eigenvectors $v_k$ to the eigenvalues $\lambda_k$ to length one such that, in particular,
$$
v_1 := \frac{w_*}{\sqrt{2|\widetilde{E}|}}= \left(\sqrt{\frac{d_j}{2|\widetilde{E}|}} \right)_{j=1}^n.
$$
In general, the matrix of the eigenvector is orthogonal and the scalar product of two eigenvectors $v_i$ and $v_j$  satisfies $\langle v_i, v_j\rangle =\delta_{ij}$. In particular, for $v_1$ we obtain:
$$
0=<v_k, v_1>= \frac{1}{2 |\widetilde{E}|}  \sum_{j=1}^n v_{k, j} \sqrt{d_j} \mbox{ for }  k\neq 1 \quad \mbox{and  }
\sum_{j=1}^n v_{k, j}^2 =  \sum_{k=1}^n v_{k, j}^2=1.
$$
A key observation for our context is that hitting times possess a spectral decomposition as was given by Lov\'asz (see \cite{lovaszgraphs}) in the following theorem.
\begin{theo}\cite[Theorem 3.1]{lovaszgraphs}\label{Lovasztheo}
The expected hitting times have the following spectral decomposition
\begin{equation}\label{specrep}
H_{ij}=2 |\widetilde{E}| \sum_{k=2}^n \frac{1}{1- \lambda_k} \left(   \frac{v_{k, j}^2}{d_j}   -   \frac{v_{k, i}  v_{k, j}}{\sqrt{d_i d_j}}     \right).
\end{equation}
\end{theo}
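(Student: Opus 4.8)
The plan is to turn the definition of the hitting times into a singular linear system and solve it in the orthonormal eigenbasis of the symmetric matrix $B$. Write $P=DA$ for the transition matrix of $(X_i)$, so that $P_{il}=a_{il}/d_i$ and $\pi$ is reversible for $P$. Fix the target $j$ and set $h:=(H_{ij})_{i=1}^n$, the $j$-th column of the hitting-time matrix. First-step analysis gives $H_{ij}=1+\sum_l P_{il}H_{lj}$ for $i\neq j$, together with $H_{jj}=0$, while Kac's formula identifies the expected return time to $j$ as $1/\pi(j)$. These combine into the single identity
\begin{equation}\label{linsys}
(I-P)\,h=\mathbf 1-\frac{1}{\pi(j)}\,e_j,
\end{equation}
where $\mathbf 1$ is the all-ones vector and $e_j$ the $j$-th standard basis vector; the entry at $i=j$ encodes precisely the return-time relation.

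Next I symmetrize. Because $P=\sqrt D\,B\,\sqrt D^{-1}$, we have $I-P=\sqrt D\,(I-B)\,\sqrt D^{-1}$, so substituting $g:=\sqrt D^{-1}h=(\sqrt{d_i}\,H_{ij})_{i=1}^n$ turns \eqref{linsys} into
$$(I-B)\,g=\sqrt D^{-1}\Bigl(\mathbf 1-\tfrac{1}{\pi(j)}e_j\Bigr),$$
whose right-hand side has entries $\sqrt{d_i}-\tfrac{1}{\pi(j)}\sqrt{d_j}\,\delta_{ij}$. Expanding $g=\sum_k c_k v_k$ in the orthonormal eigenbasis and pairing the right-hand side against $v_k$, the orthogonality relation $\sum_i v_{k,i}\sqrt{d_i}=0$ for $k\ge2$ (recorded above) shows that its $k$-th coefficient is $r_k=-\tfrac{1}{\pi(j)}\sqrt{d_j}\,v_{k,j}$ for $k\ge2$, while $r_1=0$; the latter is exactly the solvability condition forced by the singularity of $I-B$, whose kernel is spanned by $v_1$. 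Hence for $k\ge2$,
$$c_k=\frac{r_k}{1-\lambda_k}=-\frac{2|\widetilde E|}{\sqrt{d_j}}\,\frac{v_{k,j}}{1-\lambda_k},$$
using $1/\pi(j)=2|\widetilde E|/d_j$, whereas $c_1$ remains free.

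The free coefficient $c_1$ is pinned by the boundary condition $g_j=\sqrt{d_j}\,H_{jj}=0$, which gives $c_1 v_{1,j}=-\sum_{k\ge2}c_k v_{k,j}$. Recovering $H_{ij}=g_i/\sqrt{d_i}$ and using $v_{1,i}/v_{1,j}=\sqrt{d_i/d_j}$ to eliminate $c_1$ yields
$$H_{ij}=\sum_{k\ge2}c_k\Bigl(\frac{v_{k,i}}{\sqrt{d_i}}-\frac{v_{k,j}}{\sqrt{d_j}}\Bigr),$$
into which the expression for $c_k$ is inserted to produce \eqref{specrep} after simplifying $\tfrac{1}{d_j}\sqrt{d_j/d_i}=1/\sqrt{d_id_j}$. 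The main obstacle is handling the singularity of $I-P$: one must verify the solvability condition $r_1=0$ and correctly identify the one remaining degree of freedom through $H_{jj}=0$, rather than inverting $I-P$ naively. A cleaner-looking but more delicate alternative would be the fundamental-matrix identity $H_{ij}=\pi(j)^{-1}\sum_{t\ge0}(P^t_{jj}-P^t_{ij})$ combined with $P^t=\sqrt D\,B^t\,\sqrt D^{-1}$ and $\sum_t\lambda_k^t=(1-\lambda_k)^{-1}$; there the same cancellation reappears as the vanishing of the $\lambda_1=1$ mode, but one must then justify convergence even when $B$ has an eigenvalue $-1$, which the linear-system route sidesteps entirely.
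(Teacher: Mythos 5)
Your proof is correct, but it takes a genuinely different route from the paper's, because the paper offers no argument for Theorem \ref{Lovasztheo} at all: it cites Lov\'asz's proof for simple graphs and merely remarks that it ``easily translates to multi-graphs''. You instead derive the formula from scratch: first-step analysis together with Kac's formula gives the singular system $(I-P)h=\mathbf{1}-\pi(j)^{-1}e_j$; conjugating by $\sqrt{D}$ symmetrizes it to $(I-B)g=\sqrt{D}^{-1}\bigl(\mathbf{1}-\pi(j)^{-1}e_j\bigr)$; in the orthonormal eigenbasis of $B$ the $\mathbf{1}$-part of the source contributes only to the $v_1$-mode (by orthogonality of $v_k$, $k\ge 2$, to $v_1$), which yields $c_k=-\frac{2|\widetilde E|}{\sqrt{d_j}}\,\frac{v_{k,j}}{1-\lambda_k}$ for $k\ge 2$; the free $v_1$-coefficient is then pinned by $H_{jj}=0$ and eliminated via $v_{1,i}/v_{1,j}=\sqrt{d_i/d_j}$, reproducing \eqref{specrep} exactly. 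I checked each of these steps and they are sound. What your approach buys is precisely what the paper asserts without proof: the argument uses only that $A$ is symmetric and nonnegative with $d_i=\sum_j a_{ij}>0$, so it applies verbatim to the multigraph/weighted graph arising from the hypergraph; and, as you note, it sidesteps the convergence issues of the series/fundamental-matrix route when $B$ has an eigenvalue $-1$. Two points you should state explicitly to make the argument airtight: first, connectedness (which the paper assumes by conditioning on $H$ connected) is needed both for finiteness of the $H_{ij}$ and for $\ker(I-B)=\mathrm{span}(v_1)$, i.e.\ for $\lambda_k<1$ when $k\ge 2$, so that dividing by $1-\lambda_k$ is legitimate; second, $r_1=0$ deserves a one-line justification rather than the phrase ``forced by the singularity'' --- either the direct computation $\sum_i\sqrt{d_i}\,v_{1,i}=\sqrt{2|\widetilde E|}=\pi(j)^{-1}\sqrt{d_j}\,v_{1,j}$, or the observation that the system is solvable (by $g$ itself), so its right-hand side must be orthogonal to the kernel of the symmetric matrix $I-B$.
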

As a matter of fact, Lov\'asz proves this theorem just for ordinary graphs. It is, however, simple matter to check that it easily translates to multi-graphs.
Theorem \ref{Lovasztheo} allows to also give a spectral representation of the average target hitting time and the average starting hitting time $H_j$ and $H^i$. Indeed, using Theorem \ref{Lovasztheo} together with the orthognality of the eigenvectors gives
\begin{align}
H_j &= \sum_{i=1}^n \pi(i) H_{ij} =\sum_{i=1}^n \sum_{k=2}^n \frac{1}{1-\lambda_k} \left(  v_{k, j}^2 \frac{d_i}{d_j} - v_{k,i} v_{k, j} \sqrt{\frac{d_i}{d_j}} \right) \nonumber \\
&=  \left( \frac{1}{d_j} \sum_{i=1}^n d_i  \right)  \left( \sum_{k=2}^n \frac{1}{1-\lambda_k} v_{k,j}^2  \right) - \sum_{k=2}^n \frac{1}{\sqrt{d_j}} \frac{v_{k,j}}{1-\lambda_k} \sum_{i=1}^n v_{k,i} \sqrt{d_i} \nonumber \\
&= \frac{2 |\widetilde{E}|}{d_j}  \sum_{k=2}^n \frac{1}{1-\lambda_k} v_{k,j}^2  -
\sum_{k=2}^n  \frac{ \sqrt{2 |\widetilde{E}|}}{ \sqrt{d_j}}  \frac{v_{k,j}}{1-\lambda_k} <v_k, v_1>  \nonumber \\
&= \frac{2 |\widetilde{E}|}{d_j}  \sum_{k=2}^n \frac{1}{1-\lambda_k} v_{k,j}^2  \nonumber \\
&=\frac{1}{\pi(j)} \sum_{k=2}^n \frac{1}{1-\lambda_k} v_{k,j}^2 \nonumber
\end{align}
Similarly we obtain,
\begin{align}
H^i&=\sum_{j=1}^n  \pi(j) H_{ij} \nonumber =
\sum_{j=1}^n \sum_{k=2}^n \frac{1}{1-\lambda_k} \left(  v_{k, j}^2 \frac{d_i}{d_j} - v_{k,i}  v_{k, j}
\sqrt{\frac{d_j}{d_i}} \right)  \nonumber \\
&=
\sum_{k=2}^n \frac{1}{1-\lambda_k} \left(  \sum_{j=1}^n v_{k, j}^2  - v_{k,i}
\sqrt{\frac{1}{d_i}}  \sum_{j=1}^n v_{k, j} \sqrt{d_j} \right)
=   \sum_{k=2}^n \frac{1}{1-\lambda_k}  \nonumber
\end{align}
Note, that by orthogonality  
we have $$\sum_{k=2}^n v_{k,j}^2 = 1-v_{1,j}^2=1- \pi(j).$$
On the other hand $$  \sum_{k=2}^n (1-\lambda_k) v_{k,j}^2 = \sum_{k=1}^n (1-v_k) v_{k, j}^2= 1- B_{jj} = 1$$ since
$B= \sum_{k=1}^n \lambda_k v_k v_k^t$ (by the spectral theorem and the fact that the  adjacency matrix has zeros on the diagonal). Therefore, employing the inequality between arithmetic and harmonic means
$$
\frac{\sum_{k=2}^n \frac{1}{1-\lambda_k} v_{k,j}^2  }{\sum_{k=2}^n  v_{k,j}^2}   \geq \frac{\sum_{k=2}^n  v_{k,j}^2}{ \sum_{k=2}^n (1-\lambda_k)v_{k,j}^2 }.
$$
Thus
\begin{align}
H_j&= \frac{1}{\pi(j)} \sum_{k=2}^n \frac{1}{1-\lambda_k} v_{k, j}^2
\geq \frac{1}{\pi(j)} \frac{(\sum_{k=2}^n v_{k, j}^2)^2}{\sum_{k=2}^n (1-\lambda_k) v_{k,j}^2} =\frac{1}{\pi(j)} (1-\pi(j))^2
\ge \frac{2|\widetilde{E}|}{d_j} - 2 \nonumber
\end{align}
On the other hand,
\begin{align}
H_j&= \frac{2|\widetilde{E}|}{d_j} \sum_{k=2}^n \frac{1}{1-\lambda_k} v_{k,j}^2 \leq \frac{2|\widetilde{E}|}{d_j}  \frac{1}{1-\lambda_2} (1-\pi(j))=  \frac{2|\widetilde{E}|}{d_j}  \frac{1}{1-\lambda_2} (1-\frac{d_j}{2|\widetilde{E}|}). \nonumber
 \end{align}
It thus suffices to analyze the behaviour of $|\widetilde{E}|$, $d_j$, and the size of the spectral gap $1-\lambda_2$.\\
For the first two quantitites, consider any vertex $j \in H$. Then  $$d_j= (d-1) \sharp \{ e: j \in e\} \quad \mbox{i.e.}\quad
d_j= \sum\limits_{\substack{i_1<i_2<\cdots <i_{d-1} \\ i_k\neq j; \,\, \forall \,\, k=1, \cdots, d-1}} X_{i_1, i_2, \cdots, i_{d-1}, j}$$
where $X_{i_1, \cdots, i_d}$ is the indicator for the presence of the edge $(i_1, \cdots, i_d)$. Note that $\mathbb{E}(d_j) = \binom{n}{d-1}  p$ tends to $\infty $ by definition of $p$.
By Chernoff's inequality:
$$ \mathbb{P}(d_j \le \mathbb{E}(d_j)-\lambda) \leq e^{-\frac{\lambda^2}{2\mathbb{E}(d_j)}}
\quad \mbox{and} \,\,\, \mathbb{P}(d_j \ge \mathbb{E}(d_j)+\lambda) \leq e^{-\frac{\lambda^2}{2\mathbb{E}(d_j) + \frac{\lambda}{3}}}$$
Choosing $\lambda = c \sqrt{\binom{n}{d-1} p }$ for some constant $c>0$ leads to:
 \begin{align}
 \mathbb{P}\left( \mathbb{E}(d_j)-\lambda < d_j < \mathbb{E}(d_j)+\lambda\right) &=
 \mathbb{P}\left( \{\{d_j \le \mathbb{E}(d_j)-\lambda\} \cup \{d_j \ge \mathbb{E}(d_j)+\lambda\}\}^c\right)  \nonumber \\
 &=1- \mathbb{P}\left( \{d_j \le \mathbb{E}(d_j)-\lambda\} \cup \{d_j \ge \mathbb{E}(d_j)+\lambda\}\}\right)  \nonumber \\
 &\ge 1-  \mathbb{P}\left( d_j \le \mathbb{E}(d_j)-\lambda\right)- \mathbb{P}\left( d_j \ge \mathbb{E}(d_j)+\lambda\right)\nonumber \\
  &\ge 1-  e^{-\frac{\lambda^2}{2\mathbb{E}(d_j)}}- e^{-\frac{\lambda^2}{2\mathbb{E}(d_j) + \frac{\lambda}{3}}}
 \nonumber \\
 &\ge 1-  e^{-\frac{c^2}{2}}- e^{-\frac{c^2}{4}} \ge  1- 2e^{-\frac{c^2}{4}}  \nonumber
 \end{align}
 for $n$ sufficiently large.

On the other hand,$|\widetilde{E}| = \binom{d}{2} \sharp \{e: e \in E  \}$ where $E$ is the set of hyperedges.
Thus $\mathbb{E}(|\widetilde{E}|) = \binom{d}{2} \binom{n}{d} p $.
If we consider a deviation of $c \sqrt{\binom{d}{2} \binom{n}{d} p}$ for some $c>0$ we again obtain by an application of Chernoff's inequality as above that with probability $1-2e^{\frac{c^2}{4}}$
$$
\binom{d}{2} \binom{n}{d} p-c \sqrt{\binom{d}{2} \binom{n}{d} p}<\widetilde{E}<\binom{d}{2} \binom{n}{d} p+c \sqrt{\binom{d}{2} \binom{n}{d} p}
$$
If we choose $c=\log n$ we obtain that for every fixed $j$ with probability at least $1-4e^{-\frac{(\log n)^2}{4}}$:
\begin{align}
\frac{2|\widetilde{E}|}{d_j} &\leq \frac{ 2 \binom{d}{2}  \binom{n}{d} +\log n \sqrt{ \binom{d}{2} \binom{n}{d} p } }{ (d-1) \binom{n}{d-1}p - \log n \sqrt{n^{d-1} p}    } = n(1 + o(1)) \nonumber
\end{align}
(due to our choice of $p$). Similarly we see that $\frac{2|\widetilde{E}|}{d_j} \geq n(1-o(1))$ with probability at least $1-4e^{-\frac{(\log n)^2}{4}}$. Since $n e^{-\frac{(\log n)^2}{4}}$ converges to $0$, we see that
$\frac{2|\widetilde{E}|}{d_j} = n(1+o(1))$ a.a.s. simultaneously for all $j$.
%

Now, we turn to the spectral gap. Fortunately most of the work has already has been done by Lu and Peng (see
\cite{LuPeng12}) consider $d$-uniform hypergraphs $H$ and for every pair of sets $I$ and $J$ with
cardinality $s$ they associate a weight $w(I, J)$, which is the number of edges in $H$ passing through
$I$ and $J$ if $I \cap J = \emptyset$, and $0$, otherwise. The $s$-th Laplacian of $H$
is defined to be the normalized Laplacian of the thus obtained weighted graph.
As a special case, for $s=1$ we can thus consider the Laplacian  $L_A := I - D^{\frac{1}{2}} A D^{\frac{1}{2}}.$
As shown in \cite{LuPeng12} the ordered eigenvalues of $L_A$ fulfill
$$0= \tilde{\lambda}_0 \leq \tilde{\lambda}_1 \leq \cdots \leq \tilde{\lambda}_{n-1} \leq 2 $$
and:
\begin{theo}(cf. \cite[Theorem 2]{LuPeng12} of which this is a special case) \label{LuPeng}
Denote by $\overline{\lambda} = \max \{ 1-  \tilde{\lambda}_1, \tilde{\lambda}_{n-1}-1  \} = \overline{\lambda}(H^d(n, p)).$
If $p(1-p) \gg  \frac{\log^4 n}{n}$ and $1-p \gg  \frac{\log n}{n^2}$ then a.a.s.
$$ \overline{\lambda}(H^d(n, p))  \leq  \frac{1}{n-1}+ (3+o(1)) \sqrt{ \frac{1-p}{ \binom{n-1}{d-1} p } }.
$$
\end{theo}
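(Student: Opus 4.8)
The plan is to treat this statement as the $s=1$ specialization of Lu and Peng's Theorem~2, so the first and most economical step is to check that the objects match. For $s=1$ the weight $w(\{i\},\{j\})$ is precisely the number of hyperedges through both $i$ and $j$, i.e. $a_{ij}$, and the associated weighted/multi-graph is exactly $\widetilde G$. Under this identification the $s$-Laplacian becomes $L_A = I - D^{1/2}AD^{1/2} = I - B$, its eigenvalues are $\tilde\lambda_{k-1} = 1-\lambda_k$, and a short computation gives $\overline\lambda = \max\{\lambda_2,-\lambda_n\} = \max_{k\ge 2}|\lambda_k|$. Thus the theorem is really a bound on the spectral radius of $B$ restricted to the orthogonal complement of $v_1$, and all that remains at this level is to translate Lu--Peng's parameter hypotheses into $p(1-p)\gg \log^4 n/n$ and $1-p\gg \log n/n^2$ and to read off the constant $3$ and the error $\tfrac{1}{n-1}$.

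To see where such a bound comes from (and hence why the specialization is legitimate) I would argue directly as follows. Write $A = \overline A + R$, where $\overline A = \mathbb E[A]$ has off-diagonal entries $\binom{n-2}{d-2}p$ and vanishing diagonal, and $R = A-\overline A$ is the centered fluctuation. The normalized expected matrix $D^{1/2}\overline A D^{1/2}$ is, up to its missing diagonal, a rank-one matrix proportional to $v_1 v_1^{t}$; removing this rank-one part together with the diagonal correction is exactly what produces the leading eigenvalue $\lambda_1=1$ and the lower-order term $\tfrac{1}{n-1}$. Since the degrees concentrate---this was already established above by Chernoff, giving $d_i = \mathbb E[d_i](1+o(1))$ uniformly in $i$ a.a.s.---controlling $\overline\lambda$ reduces to bounding the operator norm $\|R\|$ and dividing by the typical degree, which is of order $(d-1)\binom{n-1}{d-1}p$.

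The core estimate is therefore $\|R\|$, and for this I would use the moment (trace) method: bound $\mathbb E[\operatorname{tr}(R^{2k})]$ by a sum over closed walks of length $2k$, each step weighted by a centered entry of $A$, and then use $\overline\lambda^{2k}\le \mathbb E[\operatorname{tr}((D^{1/2}RD^{1/2})^{2k})]$. Each entry $a_{ij}$ is $\mathrm{Binomial}(\binom{n-2}{d-2},p)$, so it has variance of order $\binom{n-2}{d-2}p(1-p)$; summing the dominant tree-like walk contributions yields $\|R\|\le (1+o(1))\,c\sqrt{n}\,\sqrt{\binom{n-2}{d-2}p(1-p)}$, and after normalizing by the typical degree this collapses to a multiple of $\sqrt{(1-p)/(\binom{n-1}{d-1}p)}$. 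Choosing $k\sim \log n$ and invoking $p(1-p)\gg \log^4 n/n$ is what forces the $o(1)$ error and upgrades the moment bound to the a.a.s.\ statement, while $1-p\gg \log n/n^2$ keeps the near-complete regime under control.

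The main obstacle---and the feature that distinguishes this from the $G(n,p)$ case treated by F\"uredi--Koml\'os---is that the multi-graph edge counts $a_{ij}$ are \emph{not} independent: a single hyperedge $e$ contributes simultaneously to all $\binom{d}{2}$ pairs inside $e$, so the entries of $A$ are positively correlated in blocks. In the walk count one therefore cannot treat each edge as an independent centered variable; closed walks that revisit vertices lying in a common hyperedge carry extra joint moments, and these must be shown not to inflate the leading term. Indeed, a naive independence computation would produce a $d$-dependent constant (namely $2/\sqrt{d-1}$, recovering $2$ when $d=2$), and it is precisely the worst-case bookkeeping of these correlated contributions that replaces it by the uniform constant $3$. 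Keeping track of this dependency is the step I would expect to require the most care; everything else---degree concentration, the rank-one reduction, and the union bound over $j$---is routine given the estimates already derived above.
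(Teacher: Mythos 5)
Your opening step---checking that for $s=1$ the Lu--Peng weights satisfy $w(\{i\},\{j\})=a_{ij}$, that the resulting weighted graph is $\widetilde G$, that the $s$-Laplacian is $L_A=I-B$ with $\tilde\lambda_{k-1}=1-\lambda_k$, and that $\overline{\lambda}=\max\{\lambda_2,-\lambda_n\}$---is precisely the paper's entire ``proof'': the theorem is stated as the $s=1$ special case of \cite[Theorem 2]{LuPeng12}, and no independent argument is given. Your additional trace-method sketch goes beyond what the paper attempts and is heuristically consistent (including the $2/\sqrt{d-1}$ versus $3$ accounting), but since it leaves the crucial handling of the correlated entries $a_{ij}$ unresolved, the statement still rests on the citation---exactly as in the paper---so your proposal matches the paper's approach.
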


\begin{rem}
The second condition on $p$, $1-p \gg  \frac{\log n}{n^2}$, may be omitted for our purposes
because just serves to control the
smallest eigenvalue of $L_A$.
Also note that  $\sqrt{  \frac{1-p}{ \binom{n-1}{d-1} p }}$
is at most of order $ \frac 1{\log^2 n}$.
\end{rem}

Translated to our problem, Theorem \ref{LuPeng} implies that the eigenvalues $\lambda_1, \cdots, \lambda_{n-1}$ for the matrix
$D^{\frac{1}{2}} A D^{\frac{1}{2}} = I-L_A$
satisfy $\lambda_1=1$ and $$1-\lambda_2 \geq 1- \frac{1}{n-1}-(3+o(1))\sqrt{  \frac{1-p}{ \binom{n-1}{d-1} p }}.$$
Thus we get the following upper bound
\begin{cor}
If $p(1-p) \gg  \frac{\log^4 n}{n^{d-1}}$ a.a.s.
$$\frac{1}{1-\lambda_2} \leq \frac{1}{1-\frac{1}{n-1}-(3+o(1))\sqrt{  \frac{1-p}{ \binom{n-1}{d-1} p }}} = 1+ o(1)$$
\end{cor}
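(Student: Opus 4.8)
The plan is to read the estimate directly off Theorem \ref{LuPeng} via the correspondence between the eigenvalues of $B=D^{\frac12}AD^{\frac12}=I-L_A$ and those of $L_A$ recorded just before the statement. First I would observe that if $0=\tilde\lambda_0\le\tilde\lambda_1\le\cdots\le\tilde\lambda_{n-1}$ are the eigenvalues of $L_A$, then the eigenvalues of $B$ are $\lambda_k=1-\tilde\lambda_{k-1}$; in particular $\lambda_2=1-\tilde\lambda_1$, and hence $1-\lambda_2=\tilde\lambda_1$. Since $\overline{\lambda}=\max\{1-\tilde\lambda_1,\tilde\lambda_{n-1}-1\}\ge 1-\tilde\lambda_1$, this yields $\tilde\lambda_1\ge 1-\overline{\lambda}$, i.e.
$$1-\lambda_2=\tilde\lambda_1\ge 1-\overline{\lambda}.$$

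Next I would invoke Theorem \ref{LuPeng} (its second hypothesis on $p$ being dispensable by the Remark), which under $p(1-p)\gg\log^4 n/n^{d-1}$ gives a.a.s.
$$\overline{\lambda}\le\frac{1}{n-1}+(3+o(1))\sqrt{\frac{1-p}{\binom{n-1}{d-1}p}}.$$
Substituting into the previous inequality produces the lower bound
$$1-\lambda_2\ge 1-\frac{1}{n-1}-(3+o(1))\sqrt{\frac{1-p}{\binom{n-1}{d-1}p}},$$
which is precisely the bound displayed immediately before the corollary. Taking reciprocals, legitimate since both sides are positive once $n$ is large, then gives the first inequality of the corollary.

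It remains to check that the denominator tends to $1$. The term $1/(n-1)$ plainly vanishes, and for the square-root term I would appeal to the Remark, or argue directly: using $\binom{n-1}{d-1}\sim n^{d-1}/(d-1)!$ together with $\frac{1-p}{p}=\frac{(1-p)^2}{p(1-p)}\le\frac{1}{p(1-p)}$, one gets
$$\frac{1-p}{\binom{n-1}{d-1}p}\le(1+o(1))\frac{(d-1)!}{n^{d-1}}\,\frac{1}{p(1-p)}\ll\frac{(d-1)!}{\log^4 n},$$
so that the square root is of order $1/\log^2 n=o(1)$. Consequently the denominator equals $1-o(1)$, and therefore $\frac{1}{1-\lambda_2}\le\frac{1}{1-o(1)}=1+o(1)$, as claimed.

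Since all the substantive spectral work is already carried out in Theorem \ref{LuPeng}, no genuine obstacle arises here; the corollary is essentially a translation and a limit. The one point demanding a little care is the final estimate, where converting the hypothesis on $p(1-p)$ into decay of the square-root term requires keeping the factor $(1-p)/p$ intact and bounding $(1-p)^2\le 1$, rather than naively estimating the numerator $1-p$ and the denominator $p$ separately, which would lose the dependence on $p(1-p)$.
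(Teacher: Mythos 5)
Your proof is correct and takes essentially the same route as the paper: the corollary there is read off directly from the eigenvalue translation $1-\lambda_2=\tilde{\lambda}_1\ge 1-\overline{\lambda}$ combined with the bound of Theorem \ref{LuPeng}, and from the Remark's observation that $\sqrt{\tfrac{1-p}{\binom{n-1}{d-1}p}}=O(1/\log^2 n)$. Your explicit verification of that last estimate (via $\tfrac{1-p}{p}\le\tfrac{1}{p(1-p)}$ and $\binom{n-1}{d-1}\sim n^{d-1}/(d-1)!$) merely fills in a detail the paper leaves implicit.
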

Thus we have seen
\begin{theo}
If $p(1-p) \gg  \frac{\log^4 n}{n^{d-1}}$
then a.a.s.
$$H_j=n(1+o(1)).$$
%
\end{theo}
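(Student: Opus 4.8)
The plan is simply to assemble the pieces already put in place. From the spectral representation of $H_j$ we derived the sandwich
\[
\frac{2|\widetilde{E}|}{d_j}-2 \;\le\; H_j \;\le\; \frac{2|\widetilde{E}|}{d_j}\,\frac{1}{1-\lambda_2}\Bigl(1-\frac{d_j}{2|\widetilde{E}|}\Bigr),
\]
so the entire statement reduces to controlling the three quantities $|\widetilde{E}|$, $d_j$, and the spectral gap $1-\lambda_2$. I would therefore begin by recording the two high-probability events on which the argument runs: the event $\mathcal{D}$ that $\frac{2|\widetilde{E}|}{d_j}=n(1+o(1))$ holds simultaneously for all $j$, and the event $\mathcal{G}$ that $\frac{1}{1-\lambda_2}=1+o(1)$.

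First I would verify $\mathbb{P}(\mathcal{D})\to 1$. This is exactly the Chernoff computation carried out above: for a single vertex $j$ the event $\frac{2|\widetilde{E}|}{d_j}=n(1+o(1))$ fails with probability at most $4e^{-(\log n)^2/4}$, and since $n\,e^{-(\log n)^2/4}\to 0$, a union bound over the $n$ vertices gives $\mathbb{P}(\mathcal{D})\to 1$. Next, $\mathbb{P}(\mathcal{G})\to 1$ is the content of the Corollary, which under the hypothesis $p(1-p)\gg \log^4 n/n^{d-1}$ furnishes $\frac{1}{1-\lambda_2}=1+o(1)$ a.a.s.; note that $\mathcal{G}$ concerns the single global eigenvalue $\lambda_2$ and hence needs no union bound.

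On the intersection $\mathcal{D}\cap\mathcal{G}$, which has probability tending to $1$, the lower bound reads $H_j\ge n(1+o(1))-2=n(1+o(1))$. For the upper bound I would observe that on $\mathcal{D}$ one has $\frac{d_j}{2|\widetilde{E}|}=\frac{1}{n(1+o(1))}=o(1)$, whence $1-\frac{d_j}{2|\widetilde{E}|}=1+o(1)$; combining this with $\frac{2|\widetilde{E}|}{d_j}=n(1+o(1))$ and $\frac{1}{1-\lambda_2}=1+o(1)$ yields $H_j\le n(1+o(1))$. The two bounds squeeze $H_j$ to $n(1+o(1))$ on $\mathcal{D}\cap\mathcal{G}$, which proves the claim.

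There is no genuine analytic obstacle remaining: the real work lies in the sandwich inequality, the Chernoff concentration, and the transfer of the Lu--Peng spectral-gap bound, all of which are already established. The only points needing care are bookkeeping ones, namely ensuring that the degree estimate holds uniformly in $j$ (guaranteed by the summability $n\,e^{-(\log n)^2/4}\to 0$) and checking that the correction factor $1-\frac{d_j}{2|\widetilde{E}|}$ tends to $1$, so that it does not contribute a spurious constant to the upper bound.
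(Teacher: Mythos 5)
Your proposal is correct and follows exactly the paper's route: the paper's ``proof'' of this theorem is precisely the assembly you describe, namely the sandwich $\frac{2|\widetilde{E}|}{d_j}-2\le H_j\le \frac{2|\widetilde{E}|}{d_j}\frac{1}{1-\lambda_2}\bigl(1-\frac{d_j}{2|\widetilde{E}|}\bigr)$, the Chernoff estimate with union bound giving $\frac{2|\widetilde{E}|}{d_j}=n(1+o(1))$ uniformly in $j$, and the Lu--Peng corollary giving $\frac{1}{1-\lambda_2}=1+o(1)$. Your write-up merely makes explicit the intersection-of-events bookkeeping that the paper leaves implicit in its ``Thus we have seen.''
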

On the other hand, we have already seen that $H^i = \sum_{k=2}^n \frac{1}{1-\lambda_k}$.
We therefore obtain
\begin{theo}
If $p(1-p) \gg  \frac{\log^4 n}{n^{d-1}}$ then a.a.s. $$ \frac 12 n(1+o(1))\le H^i \le n(1+o(1)).$$
\end{theo}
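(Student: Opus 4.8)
The plan is to start from the spectral identity $H^i = \sum_{k=2}^n \frac{1}{1-\lambda_k}$ already derived above and to bound the $n-1$ summands uniformly, from above and from below, and then sum. Only two facts about the spectrum are needed. First, every eigenvalue of $B = D^{1/2}AD^{1/2}$ lies in $[-1,1]$: since $L_A = I - B$ is a normalized Laplacian, its eigenvalues satisfy $0 = \tilde\lambda_0 \le \cdots \le \tilde\lambda_{n-1} \le 2$, as recorded in the discussion preceding Theorem~\ref{LuPeng}, so the eigenvalues $\lambda_k = 1 - \tilde\lambda_{k-1}$ of $B$ lie in $[-1,1]$. Second, the spectral gap estimate of the Corollary above gives $\frac{1}{1-\lambda_2} = 1 + o(1)$ a.a.s.\ under the standing assumption $p(1-p) \gg \log^4 n / n^{d-1}$.

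For the upper bound I would use monotonicity. For every $k \ge 2$ we have $\lambda_k \le \lambda_2$, and since $\lambda_2 = o(1) < 1$ both $1-\lambda_k$ and $1-\lambda_2$ are positive; hence $\frac{1}{1-\lambda_k} \le \frac{1}{1-\lambda_2}$. Summing over the $n-1$ indices $k = 2,\ldots,n$ and inserting the Corollary yields
\[
H^i = \sum_{k=2}^n \frac{1}{1-\lambda_k} \le \frac{n-1}{1-\lambda_2} = (n-1)\bigl(1+o(1)\bigr) = n\bigl(1+o(1)\bigr)
\]
a.a.s.

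For the lower bound I would use the crude but uniform estimate $\lambda_k \ge -1$, valid for every $k$ by the first fact above. Then $1 - \lambda_k \le 2$, so $\frac{1}{1-\lambda_k} \ge \frac{1}{2}$ for each $k \ge 2$, and summing gives
\[
H^i = \sum_{k=2}^n \frac{1}{1-\lambda_k} \ge \frac{n-1}{2} = \frac{1}{2}\,n\bigl(1+o(1)\bigr).
\]
Combining the two displays proves the claim.

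The genuine difficulty—controlling the spectral gap, i.e.\ bounding $\lambda_2$ away from $1$—has already been handled through Theorem~\ref{LuPeng} and its Corollary, so no substantial obstacle remains here; the one point requiring care is that the estimate must hold a.a.s.\ \emph{simultaneously} for the whole spectrum, which it does, because $\overline\lambda$ bounds $\max_{k\ge 2}|\lambda_k|$ and not merely $\lambda_2$. I would also note that the lower bound is deliberately wasteful: replacing $\lambda_k \ge -1$ by the sharper bound $|\lambda_k| \le \overline\lambda = o(1)$ from Theorem~\ref{LuPeng} would give $H^i \ge (n-1)/(1+\overline\lambda) = n(1-o(1))$, hence in fact $H^i = n(1+o(1))$; the weaker constant $\tfrac12$ stated in the theorem already follows from the elementary eigenvalue bound alone.
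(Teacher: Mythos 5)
Your proposal is correct and follows essentially the same route as the paper: the paper's proof also starts from $H^i=\sum_{k=2}^n \frac{1}{1-\lambda_k}$ and uses precisely the two-sided bound $\frac12 \le \frac{1}{1-\lambda_k} \le \frac{1}{1-\lambda_2}=1+o(1)$, which is your combination of $\lambda_k \ge -1$ and the spectral-gap Corollary. You merely spell out the justification (eigenvalues of the normalized Laplacian lie in $[0,2]$) and add an optional sharpening, both consistent with the paper's argument.
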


\begin{proof}
The key observation is that under the given conditions we have that
$$ \frac{1}{2} \leq  \frac{1}{1-\lambda_k} \leq  \frac{1}{1-\lambda_2}=1+o(1)$$ for all $k$. This proves the assertion.
\end{proof}

\section{Commute times and Cover times}
We turn now to the study of the commute time $\kappa(i, j) = H_{ij} + H_{ji}.$
An elementary computation using Theorem \ref{Lovasztheo} gives that
$$
\kappa(i, j) = 2 |\tilde{E}|  \sum_{k=2}^n \frac{1}{1-\lambda_k} \left(\frac{v_{k,i}}{\sqrt{d_i}} - \frac{v_{k, j}}{\sqrt{d_j}}\right)^2
$$
(also see \cite[Corollary 3.2]{{lovaszgraphs}}). Using this representation we obtain:
\begin{prop}
For all $i,j \in V$ we obtain the following bounds for the commute time
$$ |\tilde{E}| \left( \frac{1}{d_i}+ \frac{1}{d_j} \right)  \leq\kappa(i, j) \leq \frac{2 |\tilde{E}|}{1-\lambda_2}  \left( \frac{1}{d_i} + \frac{1}{d_j} \right).$$
\end{prop}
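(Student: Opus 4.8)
The plan is to reduce the proposition to one exact identity for the quadratic form appearing in the given spectral representation of $\kappa(i,j)$, followed by two elementary bounds on the spectral coefficients $\frac{1}{1-\lambda_k}$. Setting
\[
S(i,j):=\sum_{k=2}^n\Bigl(\frac{v_{k,i}}{\sqrt{d_i}}-\frac{v_{k,j}}{\sqrt{d_j}}\Bigr)^2,
\]
the representation reads $\kappa(i,j)=2|\widetilde{E}|\sum_{k=2}^n\frac{1}{1-\lambda_k}\bigl(\frac{v_{k,i}}{\sqrt{d_i}}-\frac{v_{k,j}}{\sqrt{d_j}}\bigr)^2$, so the whole statement follows once I know the value of $S(i,j)$ and the range of $\frac{1}{1-\lambda_k}$.

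First I would compute $S(i,j)$ exactly for $i\neq j$. The trick is to add back the $k=1$ term, which vanishes: since $v_{1,\ell}=\sqrt{d_\ell/(2|\widetilde{E}|)}$ we have $v_{1,\ell}/\sqrt{d_\ell}=1/\sqrt{2|\widetilde{E}|}$ independently of $\ell$, so $\bigl(\frac{v_{1,i}}{\sqrt{d_i}}-\frac{v_{1,j}}{\sqrt{d_j}}\bigr)^2=0$ and hence $S(i,j)=\sum_{k=1}^n\bigl(\frac{v_{k,i}}{\sqrt{d_i}}-\frac{v_{k,j}}{\sqrt{d_j}}\bigr)^2$. Expanding the square gives $\frac{1}{d_i}\sum_{k=1}^n v_{k,i}^2+\frac{1}{d_j}\sum_{k=1}^n v_{k,j}^2-\frac{2}{\sqrt{d_id_j}}\sum_{k=1}^n v_{k,i}v_{k,j}$. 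The eigenvector matrix being orthogonal, $\sum_{k=1}^n v_{k,\ell}^2=1$ for every $\ell$, and for $i\neq j$ the off-diagonal relation gives $\sum_{k=1}^n v_{k,i}v_{k,j}=0$. Therefore $S(i,j)=\frac{1}{d_i}+\frac{1}{d_j}$.

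It then remains to bound $\frac{1}{1-\lambda_k}$ in both directions. For the upper bound, the ordering $\lambda_2\geq\lambda_3\geq\cdots\geq\lambda_n$ yields $1-\lambda_k\geq 1-\lambda_2>0$ and hence $\frac{1}{1-\lambda_k}\leq\frac{1}{1-\lambda_2}$ for all $k\geq 2$; pulling this factor out of the sum gives $\kappa(i,j)\leq\frac{2|\widetilde{E}|}{1-\lambda_2}\,S(i,j)$. For the lower bound, $B=\sqrt{D}A\sqrt{D}$ has all its eigenvalues in $[-1,1]$ (equivalently $1-\lambda_k=\tilde\lambda_{k-1}\in[0,2]$ in the notation of Theorem \ref{LuPeng}), so $\frac{1}{1-\lambda_k}\geq\frac12$ for every $k\geq 2$, whence $\kappa(i,j)\geq|\widetilde{E}|\,S(i,j)$. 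Substituting $S(i,j)=\frac{1}{d_i}+\frac{1}{d_j}$ into both inequalities is exactly the assertion. I expect no real obstacle here: the only point that needs care is the cross term, where one must notice that the vanishing $k=1$ contribution lets one replace the partial sum by the full sum and invoke the orthogonality relation $\sum_k v_{k,i}v_{k,j}=0$ directly.
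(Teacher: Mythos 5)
Your proposal is correct and takes essentially the same route as the paper: the same spectral representation, the same eigenvalue bounds $\frac{1}{2}\leq\frac{1}{1-\lambda_k}\leq\frac{1}{1-\lambda_2}$, and the same orthogonality-based identity $\sum_{k=2}^n\bigl(\frac{v_{k,i}}{\sqrt{d_i}}-\frac{v_{k,j}}{\sqrt{d_j}}\bigr)^2=\frac{1}{d_i}+\frac{1}{d_j}$. Your observation that the $k=1$ term vanishes identically (since $v_{1,\ell}/\sqrt{d_\ell}$ is constant in $\ell$) is a slightly tidier way to organize the algebra than the paper's expand-and-cancel via $\pi(i)$, but it is the same computation.
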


\begin{proof}
The proof follows the ideas in the of an unweighted simple graph (see \cite{lovaszgraphs}). Again
$ \frac{1}{2} \leq  \frac{1}{1-\lambda_k} \leq  \frac{1}{1-\lambda_2}.$ Hence
$$
|\tilde{E}|  \sum_{k=2}^n  \left(\frac{v_{k,i}}{\sqrt{d_i}} - \frac{v_{k, j}}{\sqrt{d_j}}\right)^2 \le  \kappa(i, j)  \le
2 |\tilde{E}|  \frac{1}{1-\lambda_2} \sum_{k=2}^n \left(\frac{v_{k,i}}{\sqrt{d_i}} - \frac{v_{k, j}}{\sqrt{d_j}}\right)^2.
$$
But
\begin{align}
    \sum_{k=2}^n \left(  \frac{v_{k, i}}{\sqrt{d_i}} -  \frac{v_{k, j}}{\sqrt{d_j}} \right)^2 &=  
   \frac{1-\pi(i)}{d_i} + \frac{1-\pi(j)}{d_j} -  2 \sum_{k=1}^n  \frac{v_{k, i} v_{k, j}}{\sqrt{d_i d_j}}  + 2  \frac{v_{1, i} v_{1, j}}{\sqrt{d_i d_j}}   \nonumber  \\
    &= \frac{1}{d_i} + \frac{1}{d_j} - \frac{1}{2|\tilde{E}|}- \frac{1}{2|\tilde{E}|}
     +2  \frac{ \sqrt{ \frac{d_i}{2|\tilde{E}|} }\sqrt{\frac{d_j}{2|\tilde{E}|}} }{ \sqrt{d_i  d_j}} \nonumber \\&=
    \frac{1}{d_i} + \frac{1}{d_j}. \nonumber
    \end{align}

  \end{proof}
This gives the following bound on $\kappa(i, j)$.
\begin{theo}\label{theocover}
For $p(1-p) \gg  \frac{\log^4 n}{n^{d-1}}$ a.a.s. in $i$ and $j$
$$n(1+o(1)) \leq \kappa(i, j) \leq 2 n (1+o(1)).$$
\end{theo}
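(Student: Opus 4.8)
The plan is to combine the two-sided bound from the preceding Proposition with the asymptotic estimates for the degrees and the spectral gap that were already established in the Hitting times section. Concretely, the Proposition supplies
$$
|\tilde{E}| \left( \frac{1}{d_i}+ \frac{1}{d_j} \right)  \leq \kappa(i, j) \leq \frac{2 |\tilde{E}|}{1-\lambda_2}  \left( \frac{1}{d_i} + \frac{1}{d_j} \right),
$$
so it suffices to control the three quantities $\frac{2|\tilde{E}|}{d_i}$, $\frac{2|\tilde{E}|}{d_j}$, and $\frac{1}{1-\lambda_2}$, all of which have already been analyzed.

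First I would rewrite the lower bound as
$$
|\tilde{E}| \left( \frac{1}{d_i}+ \frac{1}{d_j} \right) = \frac{1}{2}\,\frac{2|\tilde{E}|}{d_i} + \frac{1}{2}\,\frac{2|\tilde{E}|}{d_j},
$$
and the upper bound as
$$
\frac{2 |\tilde{E}|}{1-\lambda_2}  \left( \frac{1}{d_i} + \frac{1}{d_j} \right) = \frac{1}{1-\lambda_2}\left( \frac{2|\tilde{E}|}{d_i} + \frac{2|\tilde{E}|}{d_j}\right).
$$
Then I would substitute the two facts already proved: that $\frac{2|\tilde{E}|}{d_\ell} = n(1+o(1))$ a.a.s., and that $\frac{1}{1-\lambda_2} = 1+o(1)$ a.a.s.\ (the latter being the Corollary following Theorem \ref{LuPeng}). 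The lower bound then reads $\frac{1}{2}n(1+o(1)) + \frac{1}{2}n(1+o(1)) = n(1+o(1))$, while the upper bound becomes $(1+o(1))\bigl(n(1+o(1)) + n(1+o(1))\bigr) = 2n(1+o(1))$, which is precisely the asserted inequality.

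The one point requiring care — and the step I expect to be the genuine obstacle, though a mild one — is the uniformity encoded in the qualifier \emph{a.a.s.\ in $i$ and $j$}. Here I would stress that the degree estimate was not merely obtained for a single fixed vertex: the Chernoff bound for each vertex fails with probability at most $4e^{-(\log n)^2/4}$, and since $n\,e^{-(\log n)^2/4}\to 0$, a union bound gives that $\frac{2|\tilde{E}|}{d_\ell}=n(1+o(1))$ holds simultaneously for \emph{all} vertices $\ell$ with probability tending to $1$. Consequently the degree estimate is available for both coordinates of every pair $(i,j)$ at once, so no further union bound over the $\binom{n}{2}$ pairs is needed. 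The spectral gap bound $\frac{1}{1-\lambda_2}=1+o(1)$ is a single event concerning the whole spectrum of $B$ and holds a.a.s.\ by the Corollary. Intersecting these two a.a.s.\ events and inserting them into the displayed bounds yields the two-sided estimate for all pairs $(i,j)$ simultaneously, which completes the proof.
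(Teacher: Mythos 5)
Your proof is correct and follows exactly the route the paper takes: the theorem is stated as an immediate consequence of the Proposition's bound $|\tilde{E}|(\frac{1}{d_i}+\frac{1}{d_j}) \leq \kappa(i,j) \leq \frac{2|\tilde{E}|}{1-\lambda_2}(\frac{1}{d_i}+\frac{1}{d_j})$ combined with the already-established facts that $\frac{2|\tilde{E}|}{d_j}=n(1+o(1))$ a.a.s.\ simultaneously for all vertices and $\frac{1}{1-\lambda_2}=1+o(1)$ a.a.s. Your explicit attention to the uniformity in $(i,j)$ via the union bound is a point the paper leaves implicit, but it is the same argument.
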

Finally, we also want to give a bound the cover time $C(H)$. From Theorem $2.7$ in Lov\'az (see \cite{lovaszgraphs}) we have that:
\begin{theo}
The cover time from any vertex $i$  of a graph with $n$ vertices is bounded as follows:
$$\min_{i, j} H_{i, j} \sum_{k=1}^n \frac{1}{k} \leq C(H) \leq \max_{i, j} H_{i, j} \sum_{k=1}^n \frac{1}{k}$$
\end{theo}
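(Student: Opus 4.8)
The plan is to prove both inequalities by Matthews' random-permutation method. Since the displayed bounds are exactly those recorded by Lov\'asz for ordinary graphs, and since the argument uses only the strong Markov property of $(X_i)$ together with the notion of hitting time $H_{ij}$, it transfers verbatim to the multi-graph $\widetilde{G}=(V,\widetilde E)$; the presence of multiple edges plays no role. So I would fix the starting vertex and write $C:=C(H)$ for the (expected) cover time. First I would draw a permutation $\sigma$ of $V=\{1,\dots,n\}$, uniform and independent of the walk, and define $T_k$ to be the first time at which all of $\sigma(1),\dots,\sigma(k)$ have been visited. Then $T_0=0$, $T_n=C$, and the telescoping identity
$$
C \;=\; \sum_{k=1}^{n}\bigl(T_k-T_{k-1}\bigr)
$$
reduces the problem to estimating the expected increments.

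The key step is the observation that $T_k-T_{k-1}>0$ holds precisely on the event $A_k$ that $\sigma(k)$ is the \emph{last} of $\sigma(1),\dots,\sigma(k)$ to be visited by the walk: if $\sigma(k)$ had already been seen before time $T_{k-1}$, covering it adds nothing and $T_k=T_{k-1}$. Conditioning on the entire trajectory of $(X_i)$ fixes the order in which the vertices are first discovered, and since $\sigma$ is an independent uniform permutation, each of the $k$ labels $\sigma(1),\dots,\sigma(k)$ is equally likely to be the last discovered; hence $\mathbb P(A_k)=\tfrac1k$, independently of the trajectory. On $A_k$ the walk sits at $w:=X_{T_{k-1}}$ (the vertex freshly discovered at time $T_{k-1}$), and conditionally on $\sigma$ the time $T_{k-1}$ is a stopping time, so the strong Markov property yields
$$
\mathbb E\bigl[T_k-T_{k-1}\ \big|\ \mathcal F_{T_{k-1}}\vee\sigma(\sigma)\bigr]
\;=\;\mathbf 1_{A_k}\,H_{\,X_{T_{k-1}},\,\sigma(k)}.
$$

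Combining the two, and using $\min_{i,j}H_{i,j}\le H_{\,w,\sigma(k)}\le \max_{i,j}H_{i,j}$ together with $\mathbb P(A_k)=\tfrac1k$, I would obtain
$$
\frac1k\,\min_{i,j}H_{i,j}\;\le\;\mathbb E\bigl[T_k-T_{k-1}\bigr]\;\le\;\frac1k\,\max_{i,j}H_{i,j}.
$$
Summing over $k=1,\dots,n$ and recalling $T_n=C$ then gives the stated bounds, the factor $\sum_{k=1}^n \tfrac1k$ appearing directly from the harmonic coefficients. (The case $k=1$ is the term $A_1$ of probability $1$, contributing the hitting time of $\sigma(1)$ from the start vertex, again in $[\min H_{i,j},\max H_{i,j}]$.)

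I expect the main obstacle to be the bookkeeping around the joint conditioning rather than any deep estimate: one must check carefully that $A_k$ is measurable with respect to $\mathcal F_{T_{k-1}}\vee\sigma(\sigma)$, so that the strong Markov increment on $A_k$ is genuinely the hitting time of the \emph{fixed} target $\sigma(k)$ started from $X_{T_{k-1}}$, and that the exchangeability computation $\mathbb P(A_k)=\tfrac1k$ is valid for every realization of the walk. Once the independence of $\sigma$ from $(X_i)$ and the strong Markov property at the $\sigma$-measurable stopping times $T_{k-1}$ are handled with care, the remaining steps are routine. As this is precisely the content of Lov\'asz's Theorem~$2.7$, an alternative would be simply to invoke \cite{lovaszgraphs}, noting only that the extension from simple graphs to the weighted/multi-graph $\widetilde G$ is immediate.
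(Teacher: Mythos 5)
The paper itself gives no proof of this statement at all---it is quoted verbatim as Theorem 2.7 of Lov\'asz \cite{lovaszgraphs}---so your attempt to actually prove it by Matthews' random-permutation method is the natural route, and most of it is sound: the measurability of $A_k$, the identity $\mathbb P(A_k)=\tfrac1k$ via exchangeability, the strong Markov property at the stopping time $T_{k-1}$, and hence the entire upper bound go through exactly as you describe.

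The lower bound, however, has a genuine gap, located precisely at the $k=1$ term that you dismiss in your final parenthesis. On $A_1$ the increment is $H_{i,\sigma(1)}$, and with probability $\tfrac1n$ one has $\sigma(1)=i$, in which case $H_{i,\sigma(1)}=H_{ii}=0$. So the $k=1$ term is \emph{not} bounded below by $\min_{i\neq j}H_{ij}$, only by $\tfrac{n-1}{n}\min_{i\neq j}H_{ij}$; relatedly, your claim that $T_k-T_{k-1}>0$ holds \emph{precisely} on $A_k$ fails for $k=1$. (If instead the minimum in the statement is read over all pairs including $i=j$, it equals $0$ and the lower bound is vacuous---but then it could not be used, as the paper later does, to conclude $\tfrac n2\log n\le C(H)$.) This is not a repairable bookkeeping issue: with the minimum over distinct pairs, the displayed lower bound is simply false in general. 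Take $n=2$ with a single edge: then $C(H)=1$, while $\min_{i\neq j}H_{ij}\cdot\bigl(1+\tfrac12\bigr)=\tfrac32$. What your argument actually proves---and what is the correct form of Matthews' lower bound---is
$$
\min_{i\neq j}H_{ij}\left(\sum_{k=2}^n\frac1k+\frac{n-1}{n}\right)\;=\;\min_{i\neq j}H_{ij}\sum_{k=1}^{n-1}\frac1k\;\le\;C(H),
$$
with the harmonic sum stopping at $n-1$ rather than $n$. For the purposes of this paper the discrepancy is harmless, since $\sum_{k=1}^{n-1}\tfrac1k\sim\log n$ and the asymptotic conclusion $\tfrac n2\log n(1+o(1))\le C(H)$ survives; but as a proof of the statement as displayed, your lower-bound step (and indeed the statement itself, in its nontrivial reading, which Lov\'asz's survey also records in this imprecise form) does not stand. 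Your fallback of simply citing \cite{lovaszgraphs} is exactly what the paper does, with the same caveat.
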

Thus we obtain
\begin{theo}
For $p(1-p) \gg  \frac{\log^4 n}{n}$
we have a.a.s \quad
$\frac{n}{2}\log n \leq C(H) \leq n \log n.$
\end{theo}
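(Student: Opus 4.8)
The plan is to feed uniform (in $i$ and $j$) estimates of the individual hitting times $H_{ij}$ into the Matthews-type bound of the preceding theorem,
$$\min_{i\neq j} H_{ij}\,\sum_{k=1}^n\frac1k \le C(H) \le \max_{i\neq j} H_{ij}\,\sum_{k=1}^n\frac1k.$$
The harmonic factor is harmless, $\sum_{k=1}^n\frac1k=\log n\,(1+o(1))$, so the whole problem reduces to showing that, a.a.s., $H_{ij}=n(1+o(1))$ \emph{simultaneously for all ordered pairs} $i\neq j$; then $\min$ and $\max$ are both $n(1+o(1))$ and the two displayed bounds follow.

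To get a grip on a single $H_{ij}$ I would start from the spectral representation (Theorem \ref{Lovasztheo}) and split off the diagonal part already analysed: write
$$H_{ij}=\underbrace{\frac{2|\widetilde{E}|}{d_j}\sum_{k=2}^n\frac{v_{k,j}^2}{1-\lambda_k}}_{=\,H_j}\;-\;\underbrace{\frac{2|\widetilde{E}|}{\sqrt{d_id_j}}\sum_{k=2}^n\frac{v_{k,i}v_{k,j}}{1-\lambda_k}}_{=:\,R_{ij}}.$$
The first summand is exactly the average target hitting time $H_j$, which we already know to be $n(1+o(1))$ (and the degree concentration used there is uniform in $j$). Hence it remains to show $R_{ij}=o(n)$, uniformly.

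The cross term $R_{ij}$ is handled by the elementary decomposition $\frac1{1-\lambda_k}=1+\frac{\lambda_k}{1-\lambda_k}$. The constant part telescopes against orthogonality: since the eigenvector matrix is orthogonal, $\sum_{k=1}^n v_{k,i}v_{k,j}=0$ for $i\neq j$, whence $\sum_{k=2}^n v_{k,i}v_{k,j}=-v_{1,i}v_{1,j}=-\frac{\sqrt{d_id_j}}{2|\widetilde{E}|}$ and the contribution is exactly $-1$, negligible against $n$. For the remaining part I would use $\max_{k\ge 2}|\lambda_k|=\overline{\lambda}$ together with the gap bound $\frac1{1-\lambda_k}\le\frac1{1-\lambda_2}=1+o(1)$ and Cauchy--Schwarz, $\sum_{k=2}^n|v_{k,i}||v_{k,j}|\le\big(\sum_k v_{k,i}^2\big)^{1/2}\big(\sum_k v_{k,j}^2\big)^{1/2}\le1$, to obtain
$$\Big|R_{ij}+1\Big|\le\frac{2|\widetilde{E}|}{\sqrt{d_id_j}}\,\frac{\overline{\lambda}}{1-\lambda_2}=n(1+o(1))\cdot\overline{\lambda}\,(1+o(1)).$$
By Theorem \ref{LuPeng} (and the remark after it) $\overline{\lambda}$ is at most of order $1/\log^2 n$, so the right-hand side is $o(n)$. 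Thus $R_{ij}=o(n)$ and $H_{ij}=H_j-R_{ij}=n(1+o(1))$, uniformly in $i\neq j$; multiplying by $\sum_{k=1}^n\frac1k$ gives $C(H)=n\log n\,(1+o(1))$, which in particular lies between $\tfrac n2\log n$ and $n\log n$ for $n$ large.

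The one point requiring care is that the estimate on $R_{ij}$ must hold \emph{for all pairs at once}. This is where the structure of the argument pays off: the bound $\overline{\lambda}=o(1)$ is a single global a.a.s.\ statement (no union bound over pairs), and $\frac{2|\widetilde{E}|}{d_j}=n(1+o(1))$ already holds simultaneously for all $j$ (Chernoff plus a union bound, using $n\,e^{-(\log n)^2/4}\to0$), so $\frac{2|\widetilde{E}|}{\sqrt{d_id_j}}=n(1+o(1))$ uniformly over pairs; the remaining ingredients are an exact identity and Cauchy--Schwarz. I expect the main conceptual obstacle to be precisely this exploitation of the near-cancellation in $R_{ij}$: the crude bound $H_{ij}\le\kappa(i,j)\le 2n(1+o(1))$ coming from the commute-time Proposition loses a factor $2$ in the upper bound and says nothing about $\min_{i\neq j}H_{ij}$, so it cannot by itself deliver a two-sided estimate; the cancellation $\sum_{k\ge2}v_{k,i}v_{k,j}=-\sqrt{d_id_j}/(2|\widetilde{E}|)$ together with $\overline{\lambda}=o(1)$ is what forces every hitting time to concentrate at $n$.
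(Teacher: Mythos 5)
Your proof is correct, and it takes a genuinely different route from the paper's --- one that in fact yields a stronger conclusion. The paper never isolates the cross term: it applies the bounds $\frac12\le\frac{1}{1-\lambda_k}\le\frac{1}{1-\lambda_2}$ term by term to the spectral sum \eqref{specrep} and combines this with the exact identity $\sum_{k=2}^n\bigl(\frac{v_{k,j}^2}{d_j}-\frac{v_{k,i}v_{k,j}}{\sqrt{d_i d_j}}\bigr)=\frac{1}{d_j}$, obtaining only the factor-two window $\frac{|\widetilde{E}|}{d_j}\le H_{ij}\le\frac{2|\widetilde{E}|}{d_j(1-\lambda_2)}$, i.e. $\frac{n}{2}(1+o(1))\le H_{ij}\le n(1+o(1))$, which it then feeds into the Lov\'asz/Matthews bound exactly as you do. Your decomposition $H_{ij}=H_j-R_{ij}$, with the orthogonality cancellation $\sum_{k\ge2}v_{k,i}v_{k,j}=-\sqrt{d_id_j}/(2|\widetilde{E}|)$ and Cauchy--Schwarz on the remainder, exploits the Lu--Peng bound in its two-sided form ($|\lambda_k|\le\overline{\lambda}=O(1/\log^2 n)$ for all $k\ge2$) rather than only $\lambda_2=o(1)$ together with the trivial $\lambda_k\ge-1$; this is precisely what upgrades the window to $H_{ij}=n(1+o(1))$ uniformly over pairs, hence $C(H)=n\log n\,(1+o(1))$, which is sharper than the stated theorem. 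Three comments. First, invoking $\overline{\lambda}$ means you also use control of the smallest eigenvalue, i.e. the second Lu--Peng hypothesis $1-p\gg\frac{\log n}{n^2}$, which the paper's Remark sets aside as unnecessary for its own argument; under the present assumption this is harmless, since $1-p\ge p(1-p)\gg\frac{\log^4 n}{n}\gg\frac{\log n}{n^2}$, so there is no gap. Second, your route incidentally avoids a delicate point in the paper's proof: the summands $\frac{v_{k,j}^2}{d_j}-\frac{v_{k,i}v_{k,j}}{\sqrt{d_id_j}}$ are not obviously nonnegative, so sandwiching that signed sum term by term needs justification, whereas your split into the nonnegative diagonal part (which is exactly $H_j$) plus a cross term bounded in absolute value does not. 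Third, as with the paper's own argument, the conclusion must be read asymptotically: what is actually proved is $\frac{n}{2}\log n\,(1+o(1))\le C(H)\le n\log n\,(1+o(1))$ (in your case the sharper $C(H)=n\log n\,(1+o(1))$), not the literal inequalities of the display; this imprecision is shared with, not worse than, the paper.
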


\begin{proof}
By \eqref{specrep} and $ \frac{1}{2} \leq \frac{1}{1-\lambda_k} \leq \frac{1}{1-\lambda_2} $ we get:
$$ |\tilde{E}|\sum_{k=2}^{n} \left( \frac{v^2_{k, j}}{d_j} - \frac{v_{k, i} v_{k, j}}{ \sqrt{d_i d_j}} \right) \leq   H(i, j)  \leq \frac{ 2|\tilde{E}|}{1-\lambda_2}  \sum_{k=2}^{n} \left( \frac{v^2_{k, j}}{d_j} - \frac{v_{k, i} v_{k, j}}{ \sqrt{d_i d_j}} \right)$$
On the other hand:
\begin{align}
\sum_{k=2}^{n} \left( \frac{v^2_{k, j}}{d_j} - \frac{v_{k, i} v_{k, j}}{ \sqrt{d_i d_j}} \right) &=
\sum_{k=1}^{n} \left( \frac{v^2_{k, j}}{d_j} - \frac{v_{k, i} v_{k, j}}{ \sqrt{d_i d_j}} \right) -  \frac{v^2_{1, j}}{d_j} + \frac{v_{1, i} v_{1, j}}{ \sqrt{d_i d_j}} \nonumber \\
&=  \frac{1}{d_j} - \frac{ \frac{d_j}{2 |\tilde{E}| } }{d_j} +\frac{ \sqrt{ \frac{d_i}{2 |\tilde{E}|}   }   \sqrt{\frac{d_j}{2 |\tilde{E}|}}  }{ \sqrt{d_i d_j} } = \frac{1}{d_j}. \nonumber
\end{align}
Now $\frac{1}{1-\lambda_2}=1+o(1)$ and $\frac{2|\tilde E|}{d_j}=n(1+o(1))$ a.a.s. {\it uniformly} in $j$. This, together with $\sum_{k=1}^n \frac{1}{k} \sim \log n$ finishes the proof.
\end{proof}
\begin{rem}
We remark that the vertex cover time $C(H)$ in the case of random walk on  $d$-uniform hypergraphs is smaller than the vertex cover time in the case of random walk on $r$-regular $d$-uniform hypergraphs.
\end{rem}



%
%
\end{document}